\title[A metrization theorem for edge ends]{A metrization theorem for edge-end spaces of infinite graphs}
\author{Max Pitz}
\address{Universit\"at Hamburg, Department of Mathematics, Bundesstrasse 55 (Geomatikum), 20146 Hamburg, Germany}
\email{max.pitz@uni-hamburg.de}
\let\polishlcross=\l
\def\l{\ifmmode\ell\else\polishlcross\fi}
\let\emptyset=\varnothing
\let\eps=\varepsilon
\let\theta=\vartheta
\let\rho=x
\let\phi=\varphi
\def\NN{\mathbb N}
\def\cC{{\mathscr C}}
\def\cP{{\mathcal P}}
\def\cX{{\mathcal X}}
\newcommand{\Set}[1]{{\left\lbrace {#1} \right\rbrace}}
\def\set#1:#2{\Set{{#1} \colon {#2}}}
\newcommand{\uc}[1]{\lfloor #1\rfloor}
\renewcommand{\subset}{\subseteq}
\theoremstyle{plain}
\newtheorem{thm}{Theorem}[section]
\newtheorem{clm}[thm]{Claim}
\newtheorem{lemma}[thm]{Lemma}
\newtheorem{problem}{Problem}
\theoremstyle{definition}
\begin{document}

\begin{abstract}
We prove that the edge-end space of an infinite graph is metrizable if and only if it is first-countable. This strengthens a recent result by Aurichi, Magalhaes Jr.\ and Real (2024).

Our central graph-theoretic tool is the use of tree-cut decompositions, introduced by Wollan (2015) as a variation of tree decompositions that is based on edge cuts instead of vertex separations. In particular, we give a new, elementary proof for Kurkofka's result (2022) that every infinite graph has a tree-cut decomposition of finite adhesion into its $\omega$-edge blocks. Along the way, we also give a new, short proof for a classic result by Halin (1984) on $K_{k,\kappa}$-subdivisions in $k$-connected graphs, making this paper self-contained.
\end{abstract}

\keywords{Metrization theorem; ends of infinite graphs; edge ends}

\subjclass[2020]{54E35, 05C63, 05C40}

\maketitle

\section{Introduction}

When studying infinite graphs $G$, both abstract graphs as well as geometric, hyperbolic graphs, one is often interested in the `boundary of $G$ at infinity'. These boundaries are formalised by considering certain equivalence relations on the \emph{rays} of $G$ (the $1$-way infinite paths in $G$). For abstract graphs, the two most common equivalence relations are as follows: 

Following Halin \cite{Halin_Enden64}, two rays in a graph $G = (V,E)$ are \emph{vertex-equivalent} if no finite set of vertices separates them. The resulting equivalence classes of rays are the \emph{vertex-ends} of $G$, and
the set of all ends is denoted by $\Omega(G)$. 
The term ``boundary at infinity'' is justified by a natural (Hausdorff, but not necessarily compact) topology on the space  $|G|= V \cup \Omega(G)$ in which every converges to `its' end, see \S\ref{subsec_background} below for details. With the subspace topology, $\Omega(G)$ becomes the \emph{end space} of $G$. We refer the reader to Diestel's survey articles \cite{diestel2011locally, diestel2010locally} for a number of applications of this topological viewpoint.

Following Hahn, Laviolette and {\v{S}}ir{\'a}{\v{n}} \cite{hahn1997edge}, two rays in a graph $G = (V,E)$ are \emph{edge-equivalent} if no finite set of edges separates them. The corresponding equivalence classes of rays are the \emph{edge-ends} of $G$, and
the set of all edge-ends is denoted by $\Omega_E(G)$. 
Once more, we have a natural topology on the space  $\|G\|= V \cup \Omega_E(G)$ in which every end lies in the closure of any of its representative rays. This topology is not necessarily Hausdorff, but if $G$ is connected, as we assume, then it is compact. See again \S\ref{subsec_background} for details.
With the subspace topology, $\Omega_E(G)$ becomes the \emph{edge-end space} of $G$. Edge-end spaces have recently been investigated in \cite{aurichi2025edge,aurichi2024topological}.

Vertex-equivalent rays are also edge-equivalent, and if $G$ is locally finite or if $G$ is a tree, then also the converse implication holds, and $\Omega(G)$ and $\Omega_E(G)$ are in fact identical (even as topological spaces). However, in graphs that contain vertices of infinite degree, edge-equivalent rays are not necessarily vertex-equivalent. Thus, vertex-equivalence is generally a finer relation than edge-equivalence, and consequently, the topological spaces $\Omega(G)$ and $\Omega_E(G)$ may differ. And while the space of vertex-ends $\Omega(G)$ is topologically well understood, much less is known about edge-end spaces. For example, we know precisely under which conditions $|G|$ and $\Omega(G)$ are metrizable \cite{diestel2006end, kurkofka2021approximating}, but for edge-end spaces, no exact characterisation has been known. Our first main result resolves this problem:

\begin{restatable}{theorem}{metrization}
\label{thm_met}
The following properties are equivalent for an edge-end space $X$: 
\begin{enumerate}
\item  $X$ is first countable,
\item  $X$ is metrizable,
\item  $X$is completely ultrametrizable,
\item $X$ is homeomorphic to the end-space $\Omega(T) = \Omega_E(T)$ of a tree $T$.
\end{enumerate}
\end{restatable}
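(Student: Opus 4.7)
The implications $(4)\Rightarrow(3)\Rightarrow(2)\Rightarrow(1)$ are classical: the end space of an arbitrary tree $T$ carries the complete ultrametric $d(\omega_1,\omega_2):=2^{-n}$, where $n$ is the length of the maximal common initial segment of the rays from a fixed root that represent $\omega_1$ and $\omega_2$; completeness follows because a Cauchy sequence of ends determines a limit ray from the root. So the substance of the theorem lies in $(1)\Rightarrow(4)$, and my plan is to use the tree-cut machinery to distil the structure of $G$ into a tree.

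Assume $\Omega_E(G)$ is first countable. First, apply the paper's version of Kurkofka's theorem to obtain a tree-cut decomposition $(T,(V_t)_{t\in V(T)})$ of $G$ of finite adhesion whose parts are the $\omega$-edge blocks of $G$. Because each part $V_t$ is $\omega$-edge-connected, no finite edge cut of $G$ separates two vertices of $V_t$, and so each block hosts at most one edge-end. This produces a canonical bijection between $\Omega_E(G)$ and $\Omega(T)\sqcup\{t\in V(T):V_t\text{ contains a ray}\}$. Moreover, any finite $F\subseteq E(G)$ induces the finite tree-cut $K(F):=\{e\in E(T):\mathrm{adh}(e)\subseteq F\}$, and conversely every finite $K\subseteq E(T)$ arises this way. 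Hence, modulo cuts internal to the blocks, the neighborhood basis of each edge-end of $G$ is parametrised by finite subsets of $E(T)$, exactly as for the corresponding end in the tree space of $T$.

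A standard diagonal argument now exploits first countability to restrict $T$: if $T$ contains a vertex of uncountable degree whose block hosts an edge-end, or an end of $T$ approached by uncountable branching, one enumerates the relevant tree-neighbours and produces an uncountable antichain of neighborhoods at the offending point, contradicting $(1)$. With these countability restrictions in hand, I plan to construct the witness $T'$ for $(4)$ by attaching a distinguished ray to $T$ at each node $t$ whose block $V_t$ contains a ray (representing the corresponding type-(b) end of $G$), and to verify that the obvious bijection $\Omega(T')\to\Omega_E(G)$ is a homeomorphism via the matched adhesion-indexed neighborhood bases.

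The main obstacle I foresee is the claim that the tree-cut parametrisation really \emph{is} cofinal in the full neighborhood basis of each type-(b) end; that is, that no further cofinal family of neighborhoods arises from cuts internal to a possibly highly complicated $\omega$-edge block. This is precisely where Halin's theorem on $K_{k,\kappa}$-subdivisions in $k$-connected graphs should enter: the subdivisions it furnishes inside each $\omega$-edge block should let one absorb arbitrary finite internal cuts into tree-cuts at the cost of only finitely many extra edges, reducing the general neighborhood computation to the tractable tree case. Once this is secured, the construction and homeomorphism check for $T'$ amount to careful but routine bookkeeping.
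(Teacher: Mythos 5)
Your overall architecture --- tree-cut decomposition into $\omega$-edge blocks, transfer of $\Omega_E(G)$ into $\|T\|$, then a first-countability argument producing a tree --- matches the paper's, but the construction of $T'$ has a genuine gap at its crux. Merely \emph{attaching} a new ray $P$ at a node $t$ hosting a dominated edge-end does not reproduce the topology at that point: in $\Omega(T')$ the basic neighbourhoods $\uc{e}$, $e\in E(P)$, of the end of $P$ meet none of the subtrees hanging at $t$, so that end is isolated from everything living in those subtrees, whereas in $X\subseteq\|T\|$ every neighbourhood of $t$ contains $\uc{t'}\cap X$ for all but finitely many children $t'$ of $t$. Concretely, let $G$ be a ray $R=r_0r_1\ldots$ plus a vertex $u$ joined to every $r_i$, plus disjoint rays $Q_1,Q_2,\ldots$ each attached to $u$ by a single edge. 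Then $\Omega_E(G)$ is a convergent sequence (the ends $\theta_i$ of the $Q_i$ converge to the end $\omega$ dominated by $u$, which lives at the block $\{u\}$), but your $T'$ has discrete end space. The fix --- and this is exactly where first countability must be \emph{used}, not just recorded --- is to unfold each such node into a ray $s_1s_2\ldots$ and redistribute its countably many relevant children $t_n$ (countability being Lemma~\ref{lem_firstcountable}) so that $t_n$ becomes a child of $s_n$; only then do the sets $\uc{s_n}$ form a neighbourhood base at the new end matching the base at $t$ in $X$. Your plan derives ``countability restrictions'' but the attachment step never exploits them, and without the redistribution the bijection $\Omega(T')\to\Omega_E(G)$ is not a homeomorphism.

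Two further inaccuracies are less central but worth correcting. First, the nodes of $T$ that host an edge-end are not those whose block ``contains a ray'': in the example above, $\omega$ lives at the block $\{u\}$, which contains no ray at all; the correct invariant is edge-domination, as in Theorem~\ref{thm_displayingedgeends}(2). Second, your ``main obstacle'' --- cuts internal to a block generating extra cofinal neighbourhoods --- dissolves by definition rather than via Halin's theorem: no finite edge cut of $G$ separates two vertices of the same $\omega$-edge block, so the boundary of any region of $G$ consists of edges between distinct blocks, and the region decomposes into finitely many regions of $T$ (Lemma~\ref{lem_connected}(2)); this is what makes the preimages of tree-regions a base of $\Omega_E(G)$. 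Halin's $K_{k,\kappa}$ theorem enters only once, to show that a finitely separable $2$-connected graph is countable in the proof that finitely separable graphs admit finitely separating spanning trees.
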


This strengthens a recent result by Aurichi, Magalhaes Jr. and Real \cite[Theorem 4.5]{aurichi2024topological} who established that first-countable, Lindel\"of edge-end spaces are metrisable.
 
The interesting implication in Theorem~\ref{thm_met} is $(1)\Rightarrow (4)$, with the other implications $(4) \Rightarrow (3)  \Rightarrow (2)  \Rightarrow (1)$ being trivial or well known. That `first countable' implies `metrizable' is a surprising local-to-global phenomenon, which is usually encountered only in spaces with much richer structure such as topological groups \cite{hewitt2013abstract}. In order to prove $(1)\Rightarrow (4)$, all the hard work lies in proving the following representation theorem for \emph{all} edge-end spaces, whether first-countable or not: 

\begin{restatable}{theorem}{representation}\label{thm_rep}
Up to homeomorphism, the edge-end spaces are precisely the subspaces $X\subseteq \| T\|$ with $\Omega_E(T)\subseteq X$, for a graph-theoretic tree $T$.
\end{restatable}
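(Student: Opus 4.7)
The plan is to prove both directions of the homeomorphism equivalence.

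For the harder direction --- every edge-end space arises as such a subspace --- I would start with a connected graph $G$ and invoke the paper's version of Kurkofka's theorem to obtain a tree-cut decomposition $(T,(V_t)_{t\in V(T)})$ of $G$ with finite adhesion into its $\omega$-edge blocks. Using this, I would define a map $\phi\colon \Omega_E(G)\to\|T\|$ as follows: given an edge-end $\omega$ of $G$, choose a representative ray $R$; if $R$ is eventually contained in a single torso $V_t$ --- which must then be an $\omega$-edge block --- set $\phi(\omega)=t$, otherwise $R$ traverses infinitely many torsos, and because the adhesion sets are finite the sequence of visited torsos traces a unique ray in $T$ converging to some $\varepsilon\in \Omega_E(T)$, and we set $\phi(\omega)=\varepsilon$.

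Four points then need to be verified. First, $\phi$ is well-defined: any two edge-equivalent rays in $G$ must lie on the same infinite side of every finite edge cut, and each adhesion set is such a cut. Second, $\phi$ is injective: a finite edge cut of $G$ separating $\omega_1$ from $\omega_2$ meets only finitely many adhesion sets and hence projects to a finite edge cut of $T$ separating their images. Third, $\Omega_E(T)\subseteq \phi(\Omega_E(G))$: every end of $T$ is lifted to an edge-end of $G$ by stringing together vertices through successive adhesion sets along its defining ray. Fourth, $\phi$ is continuous: basic open neighbourhoods in $\|T\|$ come from finite edge cuts in $T$ whose associated adhesion edges form a finite edge cut of $G$, yielding matching basic open neighbourhoods in $\|G\|$. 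Since $\Omega_E(G)$ is compact (it sits as a closed subspace inside the compact $\|G\|$) and the image $X\subseteq \|T\|$ is Hausdorff (in a tree, any two distinct points are separated by removing a single edge), the continuous bijection $\phi$ is automatically a homeomorphism onto its image.

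For the converse direction, given a tree $T$ and $X\subseteq \|T\|$ containing $\Omega_E(T)$, I would build $G$ by attaching at each vertex $v\in V(T)\cap X$ a one-way infinite ladder $H_v$ with its corner identified with $v$, leaving vertices in $V(T)\setminus X$ untouched. Each $H_v$ contributes exactly one new edge-end --- separable from $v$, and hence from the rest of $G$, by the two-edge cut of the two rails at any rung --- and any ray in $G$ is eventually either in $T$ (giving an end of $T\subseteq X$) or in exactly one gadget $H_v$ (giving the unique edge-end of $H_v$, corresponding to $v\in X$). Hence $\Omega_E(G)$ bijects naturally with $X$, and an analogous compactness argument yields the homeomorphism.

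The main obstacle will be the harder direction, specifically the simultaneous verification of the lifting step (iii) and the continuity step (iv). Both hinge on the finite adhesion property of the tree-cut decomposition: it is precisely finite adhesion that allows finite edge cuts of $T$ to pull back to, and arise from, finite edge cuts of $G$, and it is this bridge between the edge-cut topologies on $\|G\|$ and $\|T\|$ that Kurkofka's theorem on tree-cut decompositions into $\omega$-edge blocks is tailor-made to provide.
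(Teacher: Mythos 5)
Your overall strategy mirrors the paper's (tree-cut decomposition into $\omega$-edge blocks for the forward direction, augmenting $T$ for the converse), but both directions contain a genuine gap at the decisive step. In the forward direction, your argument that $\phi$ is a homeomorphism onto its image rests on the claim that $\Omega_E(G)$ is compact because it ``sits as a closed subspace inside the compact $\|G\|$''. This is false in general: $\Omega_E(G)$ need not be closed in $\|G\|$, and need not be compact. For instance, if $G$ is a countable union of rays meeting only in a common first vertex $v$, then $\Omega_E(G)$ is an infinite discrete space (any two of these ends are separated by deleting one edge), while $v$ lies in the closure of $\Omega_E(G)$ since every region containing $v$ contains cofinitely many of the ends; more generally, every vertex edge-dominating some end is a limit point of $\Omega_E(G)$ in $\|G\|$. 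So the ``continuous bijection from a compact space to a Hausdorff space'' shortcut is unavailable, and you must prove openness of $\phi$ onto its image directly. This is exactly where the paper does real work: it shows (Lemma~\ref{lem_connected}(2)) that \emph{every} region of $G$ --- not just those induced by adhesion sets --- decomposes into finitely many regions pulled back from $T$, whence the preimages $\phi^{-1}(\|C'\|)$ for regions $C'$ of $T$ form a \emph{basis} of $\Omega_E(G)$ (Claim~\ref{claim_basis}). Your step (iv) only establishes the easy half of this equivalence of topologies.

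The converse direction has a more serious structural flaw. Attaching a one-way infinite ladder $H_v$ at $v$ produces a new edge-end that is, as you yourself note, finitely separable from $v$ and from the rest of $G$ by a two-edge cut. Consequently this new end is an \emph{isolated} point of $\Omega_E(G)$, whereas the vertex $v$ it is meant to represent may be a limit point of $X$ in $\|T\|$. Concretely, let $T$ be a star of countably many disjoint rays through a centre $c$ and let $X=\Omega_E(T)\cup\{c\}$; then $X\cong\omega+1$, but your construction yields a countable discrete edge-end space plus one extra isolated point, which is not homeomorphic to $X$. The bijection you describe therefore exists but is not a homeomorphism. What is needed is that the new end be \emph{edge-dominated} by $v$, so that it occupies $v$'s topological position; the paper achieves this by first ensuring $v$ has infinitely many children with the required property and then threading a ray through infinitely many of them, so that no finite edge cut separates the new end from $v$ (and hence from the subtrees above those children). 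Replacing your ladder gadget by this threaded ray, and replacing the compactness shortcut by the basis argument via Lemma~\ref{lem_connected}, would bring your proof in line with the paper's.
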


Theorem~\ref{thm_met} follows by slightly modifying the tree $T$ in Theorem~\ref{thm_rep} to a tree $T'$ (using the assumption of first countability) so that every $x \in X \cap V(T)$ is represented by a ray $x \in X \cap \Omega(T')$; see \S\ref{subsec_met}.

We prove Theorem~\ref{thm_rep} in two steps:  First, as our main graph-theoretic tool we use \emph{tree-cut decomposition} as introduced by P.~Wollan in \cite{WOLLAN201547}. Tree-cut decomposition of finite graphs have recently emerged for their algorithmic applications  \cite{ganian2015algorithmic, giannopoulou2021linear, kim2018fpt}, but also for their structural properties, see e.g.\ \cite{GIANNOPOULOU20211} and the references therein. 
For infinite graphs, they have proven to be instrumental in detemining the minor- and immersion minimal infinitely connected graphs \cite{kurkofka2020every,knappe2024immersion}. 
In Section~\ref{sec_2}, we construct a certain tree-cut decomposition of the underlying graph $G$ which essentially captures all finite edge cuts in the graph $G$ simultaneously (see Theorem~\ref{thm_treebond_finiteadhesion} for details). 
That such a tree-cut decomposition exists is a recent result by J.~Kurkofka \cite[Theorem 5.1]{kurkofka2020every}; our contribution here is to give a new, elementary proof of this result. 
To make this part of the argument self-contained, we additionally provide an elementary proof for a classic result by Halin from \cite{halin1978simplicial} that every uncountable $k$-connected graph contains a subdivision of the complete bipartite graph $K_{k,\aleph_1}$ using nothing but Zorn's lemma, which may be of independent interest. 
Having found the suitable candidate for $T$, we derive in Section~\ref{sec_3}  the topological implications announced above.
We conclude with the following natural open problem:

\begin{problem}
Find a purely topological characterisation of edge-end spaces.
\end{problem}

For end spaces $\Omega(G)$, this has been achieved in \cite{pitz2023characterising}. However, by the main result of Aurichi, Magalhaes Jr.\ and Real in \cite{aurichi2024topological}, the class of edge-end spaces forms a proper subclass of the end-spaces, so a different and more selective characterisation is needed.

\section{Tree-cut decompositions and $\omega$-edge blocks}
\label{sec_2}
Our terminology about graphs -- especially about connectivity, spanning trees, cuts and bonds -- follows the textbook \cite{diestel2015book}. 

\subsection{Highly connected vertex sets in uncountable $k$-connected graphs}
G.~Dirac observed that every $2$-connected graph $G$ of uncountable regular cardinality $\kappa$ contains a pair of vertices $v\neq w$ with $\kappa$ independent paths between them (see \cite[\S 9]{halin1978simplicial}).
Dirac's assertion is equivalent to $G$ containing a subdivision of $K_{2,\kappa}$, and was generalised in this form to higher connectivity by R.~Halin \cite{halin1978simplicial} as follows:

\begin{thm}[Halin]
\label{thm_HalinTKnkappa}
Let $\kappa$ an uncountable regular cardinal, and fix $k \in \NN$.
Then every $k$-connected graph of size at least $\kappa$ contains a subdivision of $K_{k,\kappa}$.
\end{thm}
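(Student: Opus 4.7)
The plan is to proceed by induction on $k$. The base case $k = 2$ is Dirac's theorem recalled just above. The case $k = 1$ reduces to the statement that any connected graph of regular uncountable cardinality $\kappa$ has a vertex of degree $\geq \kappa$, which I would handle by passing to a spanning tree, rooting it, and running a pressing-down (Fodor-type) argument on limit nodes to find a vertex accumulating $\kappa$ back-edges.

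For the inductive step $k-1 \Rightarrow k$, let $G$ be a $k$-connected graph with $|G| \geq \kappa$. The inductive hypothesis supplies a subdivision $H$ of $K_{k-1,\kappa}$ in $G$, with branch vertices $A = \{a_1,\dots,a_{k-1}\}$ on one side and $\{b_\alpha : \alpha < \kappa\}$ on the other, joined by internally disjoint paths $P_{i,\alpha}$. I then apply Zorn's lemma to the family of pairs $(c, \mathcal{Q})$ where $c \in V(G) \setminus A$ is a candidate for the $k$-th branch vertex and $\mathcal{Q}$ is a collection of paths from $c$ to distinct $b_\alpha$'s, pairwise internally disjoint and internally disjoint from $H$, ordered by extension of $\mathcal{Q}$. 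A maximal element $(c,\mathcal{Q})$ exists by the standard chain argument. If $|\mathcal{Q}| \geq \kappa$, then $A \cup \{c\}$, a $\kappa$-sized subfamily of $\{b_\alpha\}$, and the corresponding restriction of $H$ together with $\mathcal{Q}$ provide the desired $TK_{k,\kappa}$.

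The main obstacle is ensuring that for \emph{some} choice of $c$ the associated maximum $|\mathcal{Q}|$ reaches $\kappa$. The intended contradiction assumes $|\mathcal{Q}| < \kappa$ for the maximal system: the paths already chosen use $< \kappa$ internal vertices outside $B$, and by $k$-connectivity the fan lemma guarantees that $c$ has $k$ internally disjoint paths to any $k$-set in $B$; regularity of $\kappa$ should then force a $\kappa$-sized set of $b_\beta$'s still reachable from $c$ by a path avoiding the used vertices, contradicting maximality. The delicate point—the step I expect to need the most care—is that the internal vertices of $\mathcal{Q}$ can a priori accumulate to $\kappa$ if the paths are long, so one must either select $c$ cleverly (possibly via a further, preliminary Zorn application over candidates for $c$) or modify $H$ so that its branching structure already reflects the fan lemma locally. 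This choice-of-$c$ bookkeeping, powered by the regularity of $\kappa$, is where Zorn really earns its keep in the argument.
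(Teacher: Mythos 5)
There is a genuine gap in the inductive step, and it is exactly at the point you flag as ``delicate'': you never establish that some vertex $c$ admits $\kappa$ many paths to distinct $b_\alpha$'s that are pairwise internally disjoint and internally disjoint from $H$. The contradiction you sketch does not go through. Menger's theorem/the fan lemma gives $k$ internally disjoint $c$--$B$ paths \emph{in $G$}, but to extend a maximal system $\mathcal{Q}$ you need a path in $G$ minus (the internal vertices of $\mathcal{Q}$ and all of $H$ except the target $b_\beta$) --- and $k$-connectivity says nothing after deleting $k$ or more vertices, let alone after deleting the $\kappa$-sized subdivision $H$. (Your stated worry, that the internal vertices of $\mathcal{Q}$ might accumulate to $\kappa$, is actually not the issue: paths are finite, so $|\mathcal{Q}|<\kappa$ forces fewer than $\kappa$ internal vertices by regularity. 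The real obstruction is that the forbidden set contains $H$ itself and that connectivity does not survive these deletions.) Moreover, even granting a maximal $(c,\mathcal{Q})$ for every candidate $c$, no argument is given for why \emph{some} $c$ must achieve $|\mathcal{Q}|\ge\kappa$; quantifying over all $c$ does not by itself produce a contradiction. A secondary issue: taking Dirac's theorem ($k=2$) as a black-box base case undercuts the goal of a self-contained elementary proof, since that case is itself nontrivial.

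For contrast, the paper avoids induction on $k$ entirely. It builds a countable increasing chain $U_0\subseteq U_1\subseteq\cdots$ of vertex sets, where $U_i$ is obtained from $U_{i-1}$ by adding a Zorn-maximal family $\mathcal{C}_i$ of internally disjoint external $k$-stars attached to $U_{i-1}$. Menger's theorem (applied once, to a vertex outside the union) shows the chain exhausts $V(G)$; regularity of $\kappa$ then locates a first $i$ with $|U_i|\ge\kappa$, so $\mathcal{C}_i$ contains $\kappa$ stars attached to a set $U_{i-1}$ of size $<\kappa$, and the pigeonhole principle on the $<\kappa$ many possible $k$-element attachment sets yields $\kappa$ internally disjoint stars with a common attachment set, i.e.\ a $TK_{k,\kappa}$. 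This sidesteps precisely the extension problem your induction runs into: instead of trying to graft a $k$-th branch vertex onto a fixed $TK_{k-1,\kappa}$, it finds all $k$ branch vertices simultaneously as a common attachment set. If you want to salvage your approach, you would need a substantially new idea for the extension step; as written, the proposal does not constitute a proof.
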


Halin's original proof is not easy and uses his theory of simplicial decompositions. The following is a new, elementary proof of Halin's theorem, that only relies on Zorn's lemma and the defining property of a regular cardinal.

We shall need the following concept: Let $W$ be some set of vertices. 
An \emph{external $k$-star attached to} $W$ is a subdivided $k$-star with precisely its leaves in~$W$ (and all other vertices outside of $W$). Its set of leaves is its \emph{attachment set}.
The \emph{interior} of an external star attached to~$W$ is obtained from it by deleting~$W$, i.e.\ its leaves.
We call a collection of external stars attached to~$W$ \emph{internally disjoint} if all its elements have pairwise disjoint interior.

\begin{proof}
Let $\kappa$ be regular uncountable, and $G=(V,E)$ be a $k$-connected graph of size at least $\kappa$. Fix an arbitrary, countably infinite set of vertices $U_0$ in $G$. We recursively construct an increasing sequence $(\,U_i \colon i \in \NN\,)$ of sets of vertices in $G$ as follows.
If $U_{i-1}$ is already defined, use Zorn's lemma to choose an inclusion-wise maximal (potentially empty) collection $\cC_i$ consisting of internally disjoint, external $k$-stars in~$G$ attached to~$U_{i-1}$, and let $U_{i} := U_{i-1} \cup V[\,\bigcup \cC_i\,]$. 

We claim that $U^* = \bigcup_{i \in \NN} U_i = V$. Otherwise, pick $v \in V \setminus U^*$. Since $G$ is $k$-connected, by Menger's theorem there is an external $k$-star attached to~$U^*$ with center $v$ and leaves $\Set{v_1,v_2,\ldots,v_k } \subset U^*$. For each $n \leq k$, let $i_n \in \NN$ be the least integer such that $v_n \in U_{i_n}$. Then for $i:= \max \, \{i_1,\ldots,i_k\}$, our external $k$ star already attaches to $U_i \supseteq \Set{v_1,v_2,\ldots,v_k }$, contradicting the maximality of~$\cC_{i+1}$.

Thus, $V = \bigcup_{i \in \NN} U_i$, and since $|V| \geq \kappa$ is regular uncountable, it follows that there is a smallest $i \in \NN$ such that $|U_i| \geq \kappa$. Note that $i > 0$. Then $|U_{i-1}|< \kappa$ and so $\cC_{i}$ consists of at least $\kappa$ internally disjoint, external $k$-stars attached to~$U_{i-1}$. Moreover, since $U_{i-1}$ consists of fewer than $\kappa$ vertices, it also has fewer than $\kappa$ distinct finite subsets. By the pigeon hole principle for regular cardinals there is a subset $\cC \subset \cC_i$ of size $\kappa$ that all have the same attachment set. Since the members of $\cC$ are internally disjoint, it follows that $\bigcup \cC$ forms the desired subdivided $K_{k,\kappa}$.
\end{proof}

\subsection{Finitely separating spanning trees}

Two vertices of an infinite graph $G$ are said to be \emph{finitely separable} in $G$ if there is a finite set of edges of $G$ separating them in $G$. Let $x \sim y$ whenever $x$ and $y$ are \emph{not} finitely separable, an equivalence relation on $V(G)$. The resulting equivalence classes are the \emph{$\omega$-edge blocks} of $G$.  
If every pair of vertices in $G$ is finitely separable, i.e.\ if all $\omega$-edge blocks are trivial, then $G$ itself is said to be \emph{finitely separable}. A spanning tree $T$ of $G$ is called \emph{finitely separating} if all its fundamental cuts are finite.

The following natural result was established only quite recently:

\begin{thm}[Kurkofka]
\label{thm_finsepspanningtree}
A connected graph is finitely separable if and only if it has a finitely separating spanning tree.
\end{thm}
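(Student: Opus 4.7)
The direction $(\Leftarrow)$ is immediate: if $T$ is a finitely separating spanning tree and $u \neq v$ are vertices, the unique $u$-$v$ path in $T$ contains an edge $e$ whose fundamental cut is finite by hypothesis and separates $u, v$ in $G$.

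For $(\Rightarrow)$, my plan is a Zorn's lemma argument on what I will call \emph{certified subtrees}: pairs $(T', \sigma)$ where $T' \subseteq G$ is a subtree (connected and acyclic but not necessarily spanning) and $\sigma$ assigns to each edge $e \in T'$ a $2$-partition of $V(G)$ with finite $G$-edge cut, subject to the requirements that (i)~the two $T'-e$ components of the endpoints of $e$ lie in the two sides of $\sigma(e)$ and (ii)~the family $\{\sigma(e) : e \in T'\}$ is nested, meaning that no two of its partitions cross. Unions along chains preserve (i) and (ii), so Zorn's lemma produces a maximal $(T^{*}, \sigma^{*})$. Once $T^{*}$ is shown to span $V(G)$, condition (i) together with spanningness identifies the fundamental cut of each $e \in T^{*}$ with $\sigma^{*}(e)$, which is finite by construction, so $T^{*}$ is the desired finitely separating spanning tree.

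To show $T^{*}$ spans, suppose towards a contradiction that some $v \in V(G) \setminus V(T^{*})$ exists; using connectedness of $G$, pick a $v$-$V(T^{*})$ path ending in an edge $uw$ with $w \in V(T^{*})$ and $u \notin V(T^{*})$. Finite separability provides a finite $2$-partition $(A, V \setminus A)$ separating $u$ from $w$; if this crosses some $\sigma^{*}(e)$ with sides $(B, V\setminus B)$, the standard uncrossing --- replacing the separating side $A$ by $A \cap B$ (or $A \setminus B$, depending on which corner contains $w$), with the new edge cut bounded by the submodularity inequality $|\partial(A \cap B)| + |\partial(A \cup B)| \leq |\partial A| + |\partial B|$ --- resolves that crossing while still separating $u, w$ by a finite $2$-partition. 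A transfinite iteration should yield a finite $2$-partition separating $u, w$ that is nested with all of $\sigma^{*}(E(T^{*}))$; extending $(T^{*}, \sigma^{*})$ by the new edge $uw$ with this partition as its $\sigma$-value then contradicts maximality.

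I expect the main technical obstacle to be precisely this iterative uncrossing, since a single uncrossing with one $\sigma^{*}(e)$ may re-introduce crossings with other previously handled elements of $\sigma^{*}(E(T^{*}))$. My preferred way to handle this is to exploit the nested structure of the family already built: after uncrossing against $\sigma^{*}(e_0)$ to a smaller side $A' \subseteq A$, any later $\sigma^{*}(e_1)$ that is nested with $\sigma^{*}(e_0)$ has at least one of its sides disjoint from or contained in a side of $\sigma^{*}(e_0)$, which should restrict how crossings of $(A', V\setminus A')$ with $\sigma^{*}(e_1)$ can arise. Organising this into a well-founded recursion (perhaps on an ordinal indexing of $\sigma^{*}(E(T^{*}))$, or via a global minimum-size $u$-$w$ cut chosen by a Menger-style argument) should complete the proof.
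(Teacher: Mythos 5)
The backward direction is fine, but your Zorn's lemma strategy for the forward direction has a genuine gap at the extension step, and it is not repairable by better uncrossing. When you try to extend a maximal $(T^{*},\sigma^{*})$ by a new edge $uw$ with $w \in V(T^{*})$, your condition (i) for that new edge forces the entire component of $(T^{*}+uw)-uw$ containing $w$ --- that is, all of $V(T^{*})$ --- to lie on one side of $\sigma(uw)$, with $u$ on the other. So you do not merely need a finite cut separating the two vertices $u$ and $w$: you need a finite cut separating $u$ from the whole, possibly infinite, vertex set of $T^{*}$. Finite separability only guarantees finite cuts between pairs of vertices (or, via Menger, between disjoint \emph{finite} connected sets, as in Lemma~\ref{lem_finseparatingfinitesets}), and the separation you need can simply fail to exist. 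Concretely, let $G$ consist of a ray $R=r_1r_2r_3\ldots$ together with one extra vertex $u$ joined to every $r_i$. This $G$ is finitely separable, and the pair $(R,\sigma^{*})$ with $\sigma^{*}(r_ir_{i+1})=\bigl(\{r_1,\ldots,r_i\},\,V\setminus\{r_1,\ldots,r_i\}\bigr)$ is a certified subtree in your sense (each cut is finite, the family is nested). It is maximal: any extension must add a single edge $ur_j$, and no $2$-partition with $u$ on one side and all of $V(R)$ on the other has a finite cut, since any such cut contains all infinitely many edges $ur_i$. Yet $R$ does not span $G$. (The theorem still holds here --- the star centred at $u$ is a finitely separating spanning tree --- but your maximal object is stuck.) A secondary, unaddressed point is that adding $u$ also enlarges one component of $T^{*}-e$ for every old edge $e$, so condition (i) for the old edges imposes further constraints on which side of each $\sigma^{*}(e)$ the vertex $u$ may occupy; but the example shows the problem is more basic than this bookkeeping or than the iterative uncrossing you flag.

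For comparison, the paper's proof avoids this obstruction by a different route. It first reduces to $2$-connected graphs and applies Theorem~\ref{thm_HalinTKnkappa} with $\kappa=\aleph_1$ to conclude that a $2$-connected finitely separable graph is countable, which makes an $\omega$-length recursion possible. It then grows finite trees $T_n$ one edge at a time while \emph{deleting} from the ambient graph the surplus edges of a finite bond (from Lemma~\ref{lem_finseparatingfinitesets}) separating $V(T_n)$ from the next piece of a connecting path, so that each newly added tree edge becomes a bridge of the shrunken graph $G_n = G - E_n$; the fundamental cut of the $n$-th tree edge in the final spanning tree is then contained in the finite set $E_n + e_n$. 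That edge deletion is precisely what sidesteps your obstruction: in the dominated-ray example it steers the construction towards the star at $u$ rather than along the ray. If you want to keep a direct Zorn-style argument, you would have to certify cuts only against finite pieces of the tree and recover the fundamental cuts by a limiting argument, which is essentially what the paper's recursion accomplishes.
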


Kurkofka reduced Theorem~\ref{thm_finsepspanningtree} in \cite[Theorem 5.1]{kurkofka2020every} to an earlier result \cite[Theorem~6.3]{bruhn2006duality} of Bruhn and Diestel about the topological cycle space of infinite graphs, which itself relies on further non-trivial results. 
In the following, I give an elementary proof for Theorem~\ref{thm_finsepspanningtree} (yielding, via \cite[Lemma~8.1]{kurkofka2020every}, also an elementary proof of the mentioned theorem by Bruhn and Diestel). We need a routine lemma.

\begin{lemma}
\label{lem_finseparatingfinitesets}
Let $G$ be a finitely separable graph, and let $A,B$ be disjoint, finite, connected sets of vertices in $G$. Then $G$ has a finite bond separating $A$ from $B$.
\end{lemma}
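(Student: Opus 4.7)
My plan is to reduce to a finite minimum-cut problem in two stages: first produce \emph{some} finite edge cut separating $A$ from $B$, then pass to a minimum-size such cut and verify that minimality forces both sides to be connected.

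For the first stage I would exploit finiteness of $A$ and $B$. For each pair $(a,b) \in A \times B$, finite separability of $G$ supplies a finite edge set $F_{a,b}$ separating $a$ from $b$; the union $F := \bigcup_{(a,b) \in A \times B} F_{a,b}$ is then a finite edge set such that no vertex of $A$ lies in the same component of $G - F$ as any vertex of $B$. Letting $X$ be the union of those components of $G - F$ that meet $A$ yields a finite cut $E(X, V \setminus X) \subseteq F$ with $A \subseteq X$ and $B \subseteq V \setminus X$. In particular the family
\[
\mathcal{Z} := \{\, Z \subseteq V(G) : A \subseteq Z,\ Z \cap B = \emptyset,\ E(Z, V \setminus Z) \text{ is finite} \,\}
\]
is non-empty, and admits an element $Z_0$ minimising the natural-number parameter $|E(Z, V \setminus Z)|$.

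For the second stage I would show that this minimum-size cut $E(Z_0, V \setminus Z_0)$ is automatically a bond, by proving that both $G[Z_0]$ and $G[V \setminus Z_0]$ are connected. For $G[Z_0]$: since $G[A]$ is connected, $A$ lies in a single component $C_A$ of $G[Z_0]$. If $G[Z_0]$ had any further component $C$, then $Z_0' := Z_0 \setminus C$ would still belong to $\mathcal{Z}$ (as $A \subseteq C_A$ is untouched), and the new cut $E(Z_0', V \setminus Z_0')$ would be strictly smaller: by maximality of the component $C$ sends no edges to $Z_0 \setminus C$, yet connectedness of $G$ (together with $C \subsetneq V(G)$) forces $C$ to have at least one edge to $V \setminus Z_0$, and every such edge lies in $E(Z_0, V \setminus Z_0)$ but not in $E(Z_0', V \setminus Z_0')$. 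This contradicts minimality, so $G[Z_0]$ is connected; a symmetric argument using $G[B]$ connected (with the roles of the two sides swapped) shows $G[V \setminus Z_0]$ is connected too.

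The only delicate point is this last bookkeeping step --- ensuring the cut strictly shrinks when a superfluous component is deleted --- which rests on the fact that any such component must have an exit edge into $V \setminus Z_0$ rather than back into the other side. I do not foresee further obstacles: once the finite cut from Stage~1 is in hand, the argument is essentially a standard minimum-cut extraction, made legitimate in the infinite setting by the well-ordering of $\mathbb{N}$ rather than any form of Zorn.
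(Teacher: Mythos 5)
Your proof is correct. It follows the same two-stage outline as the paper's own argument---first produce \emph{some} finite cut separating $A$ from $B$, then pass to a minimum such cut and check it is a bond---but your first stage is more elementary: the paper invokes the edge version of Menger's theorem (no finite cut would give infinitely many edge-disjoint $A$--$B$ paths, and then the pigeonhole principle over the finitely many endpoint pairs contradicts finite separability), whereas you simply take, for each pair $(a,b)\in A\times B$, a finite separator $F_{a,b}$ and observe that the finite union $\bigcup_{(a,b)}F_{a,b}$ already separates $A$ from $B$. Both routes use the finiteness of $A$ and $B$ at the same point, but yours avoids Menger altogether, which fits the paper's stated aim of keeping the arguments elementary. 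For the second stage the paper only asserts that a minimal separating cut whose sides contain the connected sets $A$ and $B$ ``readily'' is a bond; your component-deletion argument is a correct expansion of exactly that assertion, and you rightly isolate the one delicate point, namely that a superfluous component must send at least one edge across the cut so that deleting it strictly shrinks the cut. This step uses connectedness of $G$, a hypothesis not stated in the lemma but present in its only application (to the connected graphs $G_n$) and equally needed by the paper's own proof, since in a disconnected graph the minimal separating cut could be empty and hence not a bond.
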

\begin{proof}
If there is no finite cut separating $A$ from $B$, then by Menger's theorem there are infinitely many edge-disjoint $A-B$ paths. Since $A$ and $B$ are finite, 
infinitely many of these paths start and end in the same vertex of $A$ and $B$ respectively, contradicting that $G$ was finitely separable. 
Now take a minimal such cut $F$ separating $A$ from $B$. Since $A$ and $B$ are connected, each of $A$ and $B$ is included in a connected component of $G-F$, and then it readily follows that this minimal cut $F$ is, in fact, a bond.
\end{proof}

\begin{proof}[Proof of Theorem~\ref{thm_finsepspanningtree}]
Let $G$ be a connected, finitely separable graph. Without loss of generality, we may assume that $G$ is 2-connected (otherwise, choose finitely separating spanning tree in each block, and consider their union).
By Theorem~\ref{thm_HalinTKnkappa} for $\kappa=\aleph_1$, every uncountable, $2$-connected graph has two vertices that are joined by uncountably many, internally disjoint paths, so fails to be finitely separable. Hence, $G$ is countable. Fix an enumeration $\set{v_n}:{n \in \NN}$ of $V(G)$.

We build an increasing sequence of finite subtrees $T_n$ in $G$ and an increasing sequence of finite sets of edges $E_n$ of $G$ such that $G_n = G - E_n$ is connected, $T_n \subseteq G_n$, and each edge of $T_n$ is a bridge of $G_n$.

Let $T_0 = \Set{v_0}$, and let $E_0 = \emptyset$. For the induction step, suppose $T_n$ and $E_n$  have already been constructed. Let $v^*$ be the first vertex in our enumeration of $G$ not yet included in $T_n$. Let $e_{n+1}$ be the first edge on a shortest $T_n - v^*$ path $P_n$ in $G_n$. Since $G_n \subset G$ is finitely separable, Lemma~\ref{lem_finseparatingfinitesets} yields a finite bond $F_{n+1}$ in $G_n$ separating $V(T_n)$ from $V(P_n) - V(T_n)$ (*). Then $e_{n+1} \in F_{n+1}$. Define $T_{n+1} = T_n + e_{n+1}$ and $E_{n+1} = E_n \cup (F_{n+1} - e_{n+1})$. Then $F_{n+1}$ witnesses that $e_{n+1}$ is a bridge of the connected graph $G_{n+1} = G - E_{n+1}$. This completes the induction step.

Then $T = \bigcup_{n \in \mathbb{N}} T_n$ is a spanning tree of $G$: It is clearly a tree. It is also spanning, as in each step, the distance from $T_n$ to the next fixed target $v^*$ strictly decreases by property (*). It remains to show that $T$ has finite fundamental cuts: Let $e_n$ be an arbitrary edge of $T$, and $T_1, T_2$ the two components of $T - e_n$. Since $e_n$ was a bridge of $G_n$, there are two components $C_1,C_2$ of $G_n - e_n$. Since $T \subseteq G_n$, we have $T- e_n \subseteq G_n-e_n$, and so (possibly after reindexing) $T_i$ spans $C_i$. But then
$$E(T_1,T_2) = E(C_1,C_2) \subseteq E_n + e_n,$$ and the latter set is finite.
\end{proof}

\subsection{Tree-cut decompositions}
Let $G$ be a graph, $T$ a tree, and let $\cX = \set{X_t}:{t \in T}$ be a partition of $V(G)$ into non-empty sets indexed by the nodes of $T$.\footnote{Some authors also allow empty parts which is sometimes useful for obtaining canonical objects.} The pair $\mathcal{T} = (T,\cX)$ is called a \emph{tree-cut decomposition} of $G$, and the sets $X_t$ are its \emph{parts}. We say that $(T,\cX)$ is a tree-cut decomposition \emph{into} these parts. 

If $(T, \cX)$ is a tree-cut decomposition, then we associate with every edge $e=t_1t_2 \in E(T)$ its \emph{adhesion set} $X_e := E_G(\bigcup_{t \in T_1} X_t , \bigcup_{t \in T_2} X_t)$ where $T_1$ and $T_2$ are the two components of $T - t_1t_2$ with $t_1 \in T_1$ and $t_2 \in T_2$. Clearly, $X_e$ is a cut in $G$. A tree-cut decomposition has \emph{finite adhesion} if all its adhesion sets are finite. 

\begin{thm}[Kurkofka]
\label{thm_treebond_finiteadhesion}
Every connected graph $G$ has a tree-cut decomposition $(T,\cX)$ of finite adhesion into its $\omega$-edge-blocks such that for all $t_1t_2 \in E(T)$ there exists an $X_{t_1}-X_{t_2}$ edge in $G$. 
\end{thm}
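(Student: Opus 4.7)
The plan is to factor out the $\omega$-edge block equivalence, apply Theorem~\ref{thm_finsepspanningtree} to the quotient, and then read off the desired tree-cut decomposition.

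Let $\cX = \{X_t : t \in I\}$ be the partition of $V(G)$ into $\omega$-edge blocks. First, I would form an auxiliary simple graph $H$ on vertex set $I$ by declaring $tt' \in E(H)$ whenever $t \neq t'$ and $E_G(X_t, X_{t'}) \neq \emptyset$. The crucial observation driving the argument is that every $\omega$-edge block $X_s$ lies entirely on one side of any finite edge cut $F = E_G(A,B)$ of $G$: otherwise, two vertices of $X_s$ would be finitely separated by $F$, contradicting their equivalence. This observation is the main conceptual step; everything that follows is bookkeeping plus a black-box application of Theorem~\ref{thm_finsepspanningtree}.

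From the observation I deduce two facts. First, $H$ is finitely separable (and connected, since $G$ is): given distinct blocks $X_t, X_{t'}$, choose representatives $u \in X_t$, $v \in X_{t'}$ and a finite cut $F$ of $G$ separating them; by the observation each block sits on one side of $F$, and hence the edges of $F$ project along the quotient $V(G) \to I$ to a cut of $H$ of size at most $|F|$ separating $t$ from $t'$. Theorem~\ref{thm_finsepspanningtree} therefore provides a finitely separating spanning tree $T$ of $H$, and I take $(T, \cX)$ as the required decomposition. Second, the observation also shows that $E_G(X_t, X_{t'})$ is itself finite for any two distinct blocks, since it is contained in any finite cut of $G$ separating a vertex of $X_t$ from a vertex of $X_{t'}$.

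The remaining verifications are then routine. The edge condition holds because any $t_1 t_2 \in E(T) \subseteq E(H)$ directly witnesses, by definition of $H$, an $X_{t_1}$--$X_{t_2}$ edge in $G$. For finite adhesion, given $e = t_1 t_2 \in E(T)$ with components $T_1 \ni t_1$ and $T_2 \ni t_2$ of $T - e$, the adhesion set decomposes as
\[
X_e = \bigcup_{\substack{t \in V(T_1),\, t' \in V(T_2) \\ tt' \in E(H)}} E_G(X_t, X_{t'}),
\]
a finite union, indexed by the fundamental cut of $e$ in $T$ inside $H$ (finite by the choice of $T$), of finite sets.
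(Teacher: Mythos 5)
Your proof is correct and follows essentially the same route as the paper: form the quotient graph on the $\omega$-edge blocks, observe it is connected and finitely separable, apply Theorem~\ref{thm_finsepspanningtree} to obtain a finitely separating spanning tree, and deduce finite adhesion from the finiteness of each inter-block edge set $E_G(X_t,X_{t'})$ together with the finiteness of the fundamental cuts. Your key observation that every $\omega$-edge block lies entirely on one side of any finite cut is exactly the mechanism the paper uses (phrased there as blocks ``respecting'' a finite bond), and you even supply slightly more detail than the paper on why the quotient graph is finitely separable.
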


We repeat Kurkofka's argument from \cite{kurkofka2020every} for convenience of the reader.

\begin{proof}
Let $G$ be a connected graph. Consider the graph $\tilde{G}$ defined on the collection of $\omega$-edge blocks, i.e.\ on the equivalence classes of $\sim$, by declaring $XY$ an edge whenever $X \neq Y$ and there is an $X-Y$ edge in $G$. Note that the graph $\tilde{G}$ is a simple, connected graph that is finitely separable.

By Theorem~\ref{thm_finsepspanningtree}, there is a finitely separating spanning tree $T$ of $\tilde{G}$. This spanning tree $T$ of $\tilde{G}$ translates to a tree-cut decomposition $(T,\cX)$ of $G$ where each $X_t$ is the $\omega$-edge block of $G$ corresponding to $t \in V(\tilde{G})$.  By construction, it satisfies the property that for all $t_1t_2 \in E(T)$ there exists an $X_{t_1}-X_{t_2}$ edge in $G$. 

It remains to show that $(T,\cX)$ has finite adhesion. Since all the fundamental cuts of $T$ in $\tilde{G}$ are finite by choice of $T$, it suffices to show that if a bipartition $(A,B)$ gives rise to a finite cut of $\tilde{G}$, then the bipartition 
 $(\bigcup A, \bigcup B)$ yields a finite cut of $G$ ($\bigcup A \subset V(G)$ is the set of vertices given by the union of all edge blocks in $A$). 
Between every two distinct $\omega$-edge blocks $U$ and $W$ of $G$ there are only finitely many edges, because any single $u \in U$ is separated from $w \in W$ by a finite bond of $G$ and then $U$ and $W$ must respect this finite bond. Hence, the finitely many $A-B$ edges in $\tilde{G}$ give rise to only finitely many $(\bigcup A, \bigcup B)$ edges in $G$, and these are all $(\bigcup A, \bigcup B)$ edges in $G$. 
 \end{proof}
 
We remark that given an infinite cardinal $\kappa$, one could also consider the $\kappa$-edge blocks of a graph -- maximal sets of vertices that cannot be separated from each other by the deletion of fewer than $\kappa$ edges. Then the results of the previous two sections generalise mutatis mutandis from $\omega$-edge blocks  to $\kappa$-edge blocks as long as $\kappa$ is a \emph{regular} cardinal. But we do not need this observation in the following.

\medskip
A \emph{region} of a graph $G$ is any connected subgraph $C$ with finite \emph{boundary} $\partial C := \set{xy \in E(G)}:{x \in C,\; y \notin C}$.
Given a tree-cut decomposition  $\mathcal{T}=(T,\cX)$ of a graph~$G$ as in Theorem~\ref{thm_treebond_finiteadhesion}, we conclude this section with a lemma how regions translate from $T$ to $G$ and back again:

\begin{lemma}
\label{lem_connected}
Let $\mathcal{T}=(T,\cX)$ be a tree-cut decomposition of a connected graph~$G$ as in Theorem~\ref{thm_treebond_finiteadhesion}. Then the following assertions hold:
\begin{enumerate}
\item For every region $C'$ of $T$, also $G[\bigcup C']$ is a region of $G$.
\item For every region $C$ of $G$ there exist finitely many, pairwise disjoint regions $C'_1,\ldots,C'_n$ of $T$ such that
$C = G[\bigcup C'_1 \cup \cdots \cup \bigcup C'_n]$.
\end{enumerate}
\end{lemma}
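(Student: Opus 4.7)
The plan is to exploit the natural correspondence between subtrees of $T$ and unions of parts in $\cX$, using the finite adhesion of $(T,\cX)$ together with the defining property of $\omega$-edge blocks: two vertices in the same block cannot be separated by any finite edge cut of $G$.

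For~(1), fix a region $C'$ of $T$ and set $U := \bigcup_{t \in V(C')} X_t$. To see that $\partial G[U]$ is finite, observe that every edge $xy \in E(G)$ with $x \in X_t \subseteq U$ and $y \in X_{t'}$, $t' \notin V(C')$, lies in the adhesion set $X_e$ for every tree-edge $e$ on the $t$-$t'$ path in $T$, and this path must cross the finite boundary $\partial C'$; so $\partial G[U]$ is contained in a finite union of the finite adhesion sets $X_e$ for $e \in \partial C'$. For connectedness of $G[U]$, the key point is that whenever $u, v \in X_t$ for the same $t \in V(C')$, the relation $u \sim v$ forbids any finite edge cut of $G$ from separating them; in particular $\partial G[U]$ does not, so $u$ and $v$ lie in a common component of $G - \partial G[U]$, hence of $G[U]$. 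Combining this within-part connectivity with the edges guaranteed between adjacent parts of $C'$ by Theorem~\ref{thm_treebond_finiteadhesion}, and using that $C'$ itself is connected, one assembles a walk in $G[U]$ between any two of its vertices.

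For~(2), let $C$ be a region of $G$ and set $V_0 := V(C)$. The first observation is that every $\omega$-edge block $X_t$ is either entirely contained in $V_0$ or entirely disjoint from it, since otherwise $\partial C$ would be a finite edge cut of $G$ separating two $\sim$-equivalent vertices. Set $S := \set{t \in V(T)}:{X_t \subseteq V_0}$, so that $V_0 = \bigcup_{t \in S} X_t$, and let $F$ denote the set of edges of $T$ running between $S$ and $V(T) \setminus S$. To bound $|F|$, I would select for each $tt' \in F$ an $X_t$-$X_{t'}$ edge $e(tt')$ in $G$ (provided by Theorem~\ref{thm_treebond_finiteadhesion}); each such edge lies in $\partial C$, and since every edge of $G$ determines a unique unordered pair of incident parts, the map $tt' \mapsto e(tt')$ is injective, giving $|F| \le |\partial C| < \infty$. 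Hence $T - F$ is a forest with finitely many components, and $S$ is the union of those components contained in it, say $C'_1, \ldots, C'_n$. Each $C'_i$ is a subtree of $T$ with $\partial C'_i \subseteq F$ finite, hence a region of $T$, and by construction $V(C) = \bigcup_i \bigcup C'_i$, yielding $C = G[\bigcup C'_1 \cup \cdots \cup \bigcup C'_n]$ (taking $C$ to be induced, as is standard).

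The main obstacle lies in the connectivity argument of part~(1): a priori the induced subgraph $G[X_t]$ on a single $\omega$-edge block need not itself be connected in $G$, so connectivity of $G[U]$ cannot be argued block-by-block and must instead be extracted from the finite-separation property directly. The rest is routine tree bookkeeping using the finite-adhesion hypothesis.
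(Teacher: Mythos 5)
Your proof is correct and follows essentially the same route as the paper's: both parts rest on the same three facts (finite adhesion, the impossibility of separating $\sim$-equivalent vertices by a finite edge cut, and the existence of an edge between adjacent parts), and your part~(1) is the paper's argument in slightly different bookkeeping. In part~(2) you identify the set $S$ of tree-nodes whose parts lie in $C$ directly and bound its tree-boundary by injecting it into $\partial C$, whereas the paper lifts each edge of $\partial C$ to a tree-edge and reuses part~(1); this is a minor, equally sound variation.
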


\begin{proof} 
(1) Let $C'$ be a region of $T$ with boundary $F'$. Consider $F =  \bigcup \set{X_e}:{e \in F'} \subset E(G)$. Since $(T,\cX)$ has finite adhesion, the set $F$ is finite. 
Using that $G$ contains for all $\sim$-equivalent vertices $x$ and $y$ an $x-y$ path avoiding the finitely many edges in $F$, it follows that each $X_t$ with $t \in C'$ belongs to a single component of $G-F$. Using that for every $t_1t_2 \in E(T)$ there exists an $X_{t_1}-X_{t_2}$ edge in $G$, it follows that  $G[\bigcup C']$ is connected subset of $G-F$, so included in a component $C$ of $G-F$. Moreover, since any $X_{t}$ with $t \notin C'$ is separated from $\bigcup C'$ by $F$, it follows that $G[\bigcup C'] = C$. Since $F$ is finite, this component is a region of $G$.

(2) Let $C$ be a region of $G$ with boundary $F$. Since each $G[X_t]$ is disjoint from the finite cut $F$, every $f \in F$ belongs to at least one adhesion set $X_{f'}$ of $(T,\cX)$. 
Define $F' = \set{f'}:{f \in F}$. Then for each component $C'$ of $T-F'$, we know by (1) and the construction of $F'$ that $G[\bigcup C']$ is a connected subgraph of $G-F$. Since $T-F'$ has only finitely many components, it follows that $V(C)$ is a finite union of subgraphs of the form $G[\bigcup C']$, and the result follows.
 \end{proof}

\section{Topological results on edge-end spaces}
\label{sec_3}
\subsection{Background on topological graphs}
\label{subsec_background}

We begin by introducing the spaces $\Omega(G)$ and $\Omega_E(G)$ as well as $|G| = V(G) \cup \Omega(G)$ and $\|G\| = V(G) \cup \Omega_E(G)$ formally.

If $X \subseteq V$ is a finite set of vertices and $\omega \in \Omega(G)$ is a vertex-end, 
there is a unique component of $G-X$ that contains a tail of every ray in $\omega$, which we denote by $C(X,\omega)$. 
Then $\omega$ \emph{lives} in the component $C(X,\eps)$. 
Let $\Omega(X,\omega)$ denote the set of all ends that live in $C(X,\omega)$ and put $\hat{C}(X,\omega) = C(X,\omega) \cup \Omega(X,\omega)$.
The collection of singletons $\{v\}$ for $v \in V$ together with all sets of the form $\hat{C}(X,\omega)$ for finite $X\subseteq V(G)$ and $\omega \in \Omega(G)$ forms a basis for a Hausdorff (but not necessarily compact) topology on $|G| = V \cup \Omega$. With the corresponding subspace topology, $\Omega(G)$ is the \emph{end space} of $G$.

If $F \subseteq E$ is a finite set of edges and $\omega \in \Omega_E$ is an edge-end, 
there is a unique component of $G-F$ that contains a tail of every ray in $\omega$, which we denote by $C(F,\omega)$. Note that $C(F,\omega)$ is a region according to our earlier terminology.
We say that $\omega$ \emph{lives} in the region $C(F,\omega)$. 
An edge end $\omega$ is \emph{edge-dominated by a vertex $v$} if for every finite set of edges  $F$, the vertex $v$ belongs to $C(F,\omega)$. 
Let $\Omega_E(F,\omega)$ denote the set of all ends that live in $C(F,\omega)$. 
The collection of all $\Omega_E(C)$ for all regions $C$ of $G$ forms a basis for a Hausdorff topology on $\Omega_E(G)$.  With this topology, $\Omega_E(G)$ is the \emph{edge-end space} of $G$. 
There is also a natural way to extend the latter topology to a topology on $\|G\| = V(G) \cup \Omega_E(G)$.
If $C$ is any component of $G-F$, we write $\Omega_E(C)$ for the set of edge-ends $\omega$ of $G$ with $C(F,\omega) = C$, and abbreviate $\| C \| = C \cup \Omega_E(C)$. 
The collection of all $\|C\|$ for all regions $C$ of $G$ forms a basis for a topology on $\|G\| = V(G) \cup \Omega_E(G)$. 

If $G$ is connected, then $\|G\|$ is compact but generally no longer Hausdorff: for example, two vertices belonging to the same $\omega$-edge block cannot be separated by open sets in $\|G\|$ (in particular, only the finite degree vertices form open singleton sets in $\|G\|$).  
In this paper, we shall meet the full $\|G\|$ only on trees $\|T\|$, in which case we always have a compact Hausdorff space. 
In fact, $\|T\|$ is homeomorphic to the path space topology $\cP(T)$, see \cite{pitz2023characterising}.

\subsection{Displaying edge ends by tree-cut decompositions}
We now consider how the edge-ends of a graph $G$ interact with a  tree-cut decomposition $\mathcal{T}=(T,\cX)$ of finite adhesion. 
As every edge $e=t_1t_2 \in E(T)$ induces a finite cut $F_e := E_G(\bigcup_{t \in T_1} X_t , \bigcup_{t \in T_2} X_t)$ in $G$, any edge-end of $G$ has to choose one component $T_1$ or $T_2$ of $T-e$, and we may visualise this decision by orienting $e$ accordingly. Then for a fixed end, all the edges point either towards a unique node or towards a unique end of $T$. In this way, each edge-end of $G$ \emph{lives} in a part of $\mathcal{T}$ or \emph{corresponds} to an end of $T$, and we may encode this correspondence by a map $\varphi_\mathcal{T}  \colon \Omega_E(G) \to \|T\|$. 

We say the tree-cut decomposition \emph{distinguishes} all edge-ends if $\varphi_\mathcal{T}$ is injective; and it distinguishes all edge-ends if $\varphi_\mathcal{T}$ \emph{homeomorphically}  if $\varphi_\mathcal{T}$ is a topological embedding into $\|T\|$. 

\begin{thm}
\label{thm_displayingedgeends}
For a connected graph~$G$, the tree-cut decomposition $\mathcal{T}=(T,\cX)$ from Theorem~\ref{thm_treebond_finiteadhesion} homeomorphically distinguishes all edge-ends of $G$. Moreover, 
\begin{enumerate}
\item $\varphi_\mathcal{T}$ restricts to a bijection between the undominated edge-ends of~$G$ and the ends of~$T$, and 
\item  $\varphi_\mathcal{T}$ restricts to an injection from the dominated edge-ends of~$G$ to the nodes of $T$ such that the vertices in $X_{\varphi_\mathcal{T}(\omega)}$ are precisely the vertices edge-dominating the end $\omega$.
\end{enumerate}
\end{thm}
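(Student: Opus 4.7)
The plan is to verify well-definedness of $\varphi_\mathcal{T}$, then simultaneously establish (1), (2) and injectivity via a characterisation of the vertices edge-dominating a given edge-end, and finally check the topological embedding property. Throughout, Lemma~\ref{lem_connected} is the key translation tool between cuts of $G$ and cuts of $T$.

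For well-definedness, fix $e = t_1t_2 \in E(T)$ with components $T_i \ni t_i$ of $T-e$. Since $F_e$ is a finite cut of $G$ by Theorem~\ref{thm_treebond_finiteadhesion}, any edge-end $\omega$ has a unique $C(F_e,\omega)$, which by Lemma~\ref{lem_connected}(1) equals $G[\bigcup T_i]$ for exactly one $i$; this is the orientation of $e$ under $\omega$. Consistency over any finite $F_0 \subseteq E(T)$ follows immediately from the uniqueness of $C(\bigcup_{e \in F_0} F_e,\omega)$, so the orientations converge to a unique node or end of~$T$, and $\varphi_\mathcal{T}(\omega) \in \|T\|$ is well-defined.

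The heart of the proof is the following claim: \emph{for every edge-end $\omega$ of~$G$ and every vertex $v \in X_s$, the vertex $v$ edge-dominates $\omega$ if and only if $\varphi_\mathcal{T}(\omega) = s$.} If $\varphi_\mathcal{T}(\omega) \ne s$, then the first edge $e$ on the path from $s$ to $\varphi_\mathcal{T}(\omega)$ in $T$ is oriented away from $s$, so $v \notin C(F_e,\omega)$. Conversely, if $\varphi_\mathcal{T}(\omega) = s =: t$ and $v \in X_t$, then for any finite $F \subseteq E(G)$, Lemma~\ref{lem_connected}(2) gives $C(F,\omega) = G[\bigcup C_1' \cup \cdots \cup \bigcup C_n']$ for pairwise disjoint regions $C_i'$ of~$T$, and the orientation condition on the (finite) set of $T$-edges bounding the $C_i'$ forces $t$ to lie in some $C_i'$, so $v \in X_t \subseteq C(F,\omega)$. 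From this characterisation, (2) is immediate. For injectivity, two dominated ends mapped to the same $t$ share every $v \in X_t$ as a common dominator, forcing their $C(F,\cdot)$ to coincide for every $F$; two undominated ends $\omega_1 \ne \omega_2$ mapped to the same $\eta \in \Omega(T)$ cannot in fact be separated by any finite $F \subseteq E(G)$ since Lemma~\ref{lem_connected}(2) produces a finite $F' \subseteq E(T)$ whose $\eta$-component $C'$ of $T-F'$ yields a common tail region $G[\bigcup C'] \subseteq C(F,\omega_1) \cap C(F,\omega_2)$, a contradiction. Surjectivity onto $\Omega(T)$ in (1) uses the compactness of $\|G\|$: for $\eta \in \Omega(T)$ with ray $t_0 t_1 t_2 \cdots$ and $T_i$ the $\eta$-side of $t_{i-1}t_i$, the nested closed sets $\|G[\bigcup T_i]\|$ have nonempty intersection, which contains no vertex (any $v \in X_{t_j}$ leaves $\bigcup T_i$ for $i > j$), so yields an edge-end necessarily mapping to~$\eta$.

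Finally, $\varphi_\mathcal{T}$ is continuous since each basic open $\|C'\| \subseteq \|T\|$ for a region $C'$ of~$T$ pulls back to $\Omega_E(G[\bigcup C'])$, basic open by Lemma~\ref{lem_connected}(1); and $\varphi_\mathcal{T}$ is open onto its image since a basic open $\Omega_E(C)$ of $\Omega_E(G)$, with $C = G[\bigcup C_1' \cup \cdots \cup \bigcup C_n']$ via Lemma~\ref{lem_connected}(2), maps to $\varphi_\mathcal{T}(\Omega_E(G)) \cap (\|C_1'\| \cup \cdots \cup \|C_n'\|)$. The main obstacle is the ``if'' direction of the dominator characterisation, which requires translating \emph{arbitrary} finite edge cuts of~$G$ (not merely those arising from $T$-adhesion) back to structural information on~$T$; this is precisely what Lemma~\ref{lem_connected}(2) is designed to provide, and without it the argument would only yield domination with respect to adhesion-set cuts.
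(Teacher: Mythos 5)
Your proposal is correct and, for the most part, runs along the same lines as the paper: both arguments lean entirely on Lemma~\ref{lem_connected} to translate regions between $G$ and $T$, and both establish the embedding via the pullback identity $\varphi_\mathcal{T}^{-1}(\|C'\|) = \Omega_E(G[\bigcup C'])$ together with the decomposition of an arbitrary region of $G$ into finitely many regions of $T$. Your treatment of domination is in fact slightly more complete than the paper's: you prove both directions of the characterisation (a vertex $v \in X_s$ dominates $\omega$ if and only if $\varphi_\mathcal{T}(\omega) = s$), whereas the paper only spells out the ``if'' direction and leaves the converse (which is needed for the word ``precisely'' in (2), and for concluding that ends mapped to $\Omega(T)$ are undominated) as an easy consequence of finite adhesion. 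The one step where you genuinely diverge is surjectivity onto $\Omega(T)$: the paper picks one vertex $x_n \in X_{t_n}$ along a ray of $T$ and applies the Star--Comb Lemma, with finite adhesion ruling out the star, so that the comb's spine supplies the desired edge-end; you instead intersect the nested clopen sets $\|G[\bigcup T_i]\|$ and invoke compactness of $\|G\|$. Your route is clean and works (the intersection contains no vertex since every part eventually leaves the $\eta$-side), but it imports the compactness of $\|G\|$ for connected $G$, which the paper states in the background section without proof; the Star--Comb argument keeps the proof self-contained modulo a standard lemma from Diestel's book. Your injectivity argument, routed through common dominators in the dominated case and a common tail region $G[\bigcup C']$ in the undominated case, is also a valid (if slightly longer) substitute for the paper's one-line deduction from the basis claim.
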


\begin{proof}
We begin with the following useful assertion:
\begin{clm}
\label{claim_basis}
The collection of preimages $\varphi^{-1}(\|C'\|)$, where $C'$ is a region of $T$, forms a basis for $\Omega_E(G)$. 
\end{clm}
To see the claim, first note that each such preimage is open: Indeed, by definition of $\varphi_T$ we have
\begin{equation}
\label{eqn1}
\tag{$*$}
\Omega[\bigcup C'] = \varphi^{-1}(\|C'\|),
\end{equation}
and $\Omega[\bigcup C']$ is open in $\Omega_E(G)$ since $G[\bigcup C']$ is a region of $G$ by Lemma~\ref{lem_connected}(1).
Now let $C$ be region in $G$ inducing a basic open set $\Omega_E(C)$ in $\Omega_E(G)$. By Lemma~\ref{lem_connected}(2), there are finitely many, pairwise disjoint regions $C'_1,\ldots,C'_n$ of $T$ such that $C = G[\bigcup C'_1 \cup \cdots \cup \bigcup C'_n]$. By (\ref{eqn1}) it follows 
$$\Omega_E(C)= \Omega_E(\bigcup C'_1) \cup \cdots \cup \Omega_E(\bigcup C'_n) = 
\varphi^{-1}(\|C'_1\|) \cup \cdots \cup \varphi^{-1}(\|C'_n\|),$$
which implies that preimages of regions in $T$ form a base of $\Omega_E(G)$ as claimed.

\medskip
Then $\varphi$ is a topological embedding: it is injective, since for any $\omega \neq \omega'$ there is a region $C$ of $G$ containing $\omega$ but not $\omega'$. By the claim, there is a region $C'$ in $T$ with $\omega \in  \varphi^{-1}(\|C'\|) \subseteq \Omega_E(C) \not\ni \omega'$ and hence $\varphi(\omega) \in C' \not\ni \varphi(\omega')$. 
Furthermore, Claim~\ref{claim_basis} clearly implies that $\varphi$ is a homeomorphism onto its image.

To see the moreover assertions, we first observe that if $\varphi(\omega) =: t \in V(T)$, then $\omega$ is edge-dominated by all vertices in $X_{t}$. For this, consider an arbitrary region $C$ in $G$ in which $\omega$ lives. By the claim, there is a region $C'$ of $T$ such that $\omega \in \varphi^{-1}(\|C'\|) \subset \Omega_E(C)$. But then $t = \varphi(\omega) \in C'$ implies $X_t \subseteq G[\bigcup C'] \subseteq C$, so $X_t$ belongs to every such region $C$, implying the observation.

Let $\omega \in \Omega_E(G)$. Since $(T,\cX)$ has finite adhesion, it is clear that $\phi(\omega) \in \Omega(T)$ implies that $\omega$ is undominated. Hence we know so far that $\varphi$ is injective, and maps undominated ends to $\Omega(T)$ and edge-dominated edge-ends to $V(T)$, giving (2). To complete the proof of (1) it remains to show that $\varphi$ maps onto $\Omega(T)$. So let $R=t_1t_2t_3\ldots$ be an arbitrary ray in $T$. For each $n \in \NN$ pick a vertex in $x_n \in X_{t_n}$. By the Star-Comb Lemma \cite[Lemma~8.2.2]{diestel2015book}, there is an infinite star or an infinite comb $H$ in $G$ attached to $\set{x_n}:{n \in \NN}$. Since $(T,\cX)$ has finite adhesion, we cannot get a star. So $H$ is a comb. But then the spine of $H$ belongs to an edge-end $\omega$ with $\varphi(\omega)$ being mapped to the end of $T$ containing $R$.
\end{proof}

The representation theorem from the introduction is now a simple consequence.

\representation*

\begin{proof}
By Theorem~\ref{thm_displayingedgeends}, every edge-end space is homeomorphic to a subspace $X \subset \|T\|$ for some graph-theoretic tree $T$ such that $\Omega_E(T) \subseteq X$. 

Conversely, suppose we are given such a subspace $X \subset \|T\|$ with $\Omega_E(T) \subseteq X$. We will create a graph $G_X$ with $V(G_X) = V(T)$ with $X \cong \Omega_E(G)$ by carefully adding additional edges to $T$.
Let us pick an abitrary root of $T$. We may assume that each $x \in X \cap V(T)$ has infinite degree in $T$: Otherwise, simply add some new children of $x$ as leaves to $T$ (which changes $T$ but not $X$).
For each $x \in X \cap T$, select infinitely many distinct children $t_n$ ($n \in \NN$) of $x$ and insert an edge between $t_n$ and $t_{n+1}$ for all $n \in \NN$, resulting in a ray $R_x=t_0t_1t_2\ldots$. Call the resulting graph $G_X$ and let $\omega_x$ be the end of $G_X$ containing the ray $R_x$. Then 
 $T$ is a finitely separating spanning tree of $G_X$. 
 From Theorem~\ref{thm_displayingedgeends} we know that $T$ homeomorphically displays the edge-ends of $T$, i.e.\ $\varphi \colon \Omega_E(G_X) \to \|T\|$ is a topological embedding with image $X$. Thus, $X$ and $\Omega_E(G_X) $ are homeomorphic, which concludes the proof.
\end{proof}

For (vertex-)ends, the question whether every graph admits a tree-decomposition of finite adhesion that distinguishes all ends of the underlying graph (formally posed by Diestel in 1992 \cite[4.3]{diestel1992end}), turns out to be false, see counterexamples by Carmesin \cite[\S3]{carmesin2019all} and Koloschin, Krill and Pitz \cite[\S10]{koloschin2023end}. To capture the vertex-ends of a graph, \emph{well-founded tree-decompositions} are required, see Kurkofka and Pitz \cite{kurkofkapitz_rep}.

\subsection{A metrization theorem for edge-end spaces}
\label{subsec_met}

Theorem~\ref{thm_displayingedgeends} allows us to translate topological questions of edge-end spaces to questions about subspaces of $\|T\|$. We now prove the metrization theorem announced in the introduction:

\metrization*

It is well-known that the completely ultrametrizable spaces are precisely the spaces that can be represented as (edge-)end space of a graph-theoretic tree, see e.g.~\cite{hughes2004trees}, giving $(4) \Leftrightarrow (3)$. As $(3) \Rightarrow (2) \Rightarrow (1)$ are trivial, it remains to prove $(1) \Rightarrow (4)$.

This proof relies on the following lemma. In it, we always consider \emph{rooted} trees $T$, i.e.\ trees with a special vertex $r$ called the \emph{root}. The \emph{tree-order} $\leq_T$ on $V(T)$ with root $r$ is defined by setting $u \leq_T v$ if $u$ lies on the unique path from $r$ to $v$ in $T$. Given a vertex $x$ of $T$, we write $\uc{x} = \set{v \in V(T)}:{v \geq x}$. Given an edge $e=xy$ of $T$ with $x<y$, we abbreviate $\uc{e}:= \uc{y}$. The neighbours of $x$ in $\uc{x}$ are the \emph{children} of $x$. Evidently, the collection of $\uc{x}$ induces a basis for $\Omega(T) = \Omega_E(T)$.

\begin{lemma}
\label{lem_firstcountable}
The following are equivalent for a subspace $X \subset \|T\|$:
\begin{enumerate}
	\item $X$ is first countable,
	\item every $x \in X \cap T$ has only countable many children $t$ with $\uc{t} \cap X \neq \emptyset$. 
\end{enumerate}
\end{lemma}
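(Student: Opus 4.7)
The plan is to prove the two implications separately and directly, without extra machinery.

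For (2) $\Rightarrow$ (1), I fix a point $x \in X$ and aim to exhibit an explicit countable local base at $x$. Two cases arise. If $x \in X \cap \Omega_E(T)$ is an end of $T$ with defining ray $x_0x_1x_2\ldots$ from the root, the tails $\uc{x_n} \cap X$ form a countable local base by a routine verification, because any finite $F \subseteq E(T)$ is eventually disjoint from $\uc{x_n}$ (each edge of $F$ lies in $\uc{x_n}$ only for small $n$), so $\uc{x_n}$ lies entirely in the $x$-component of $T-F$. If $x \in X \cap V(T)$ is a vertex, I enumerate its at most countably many relevant children $t_1,t_2,\ldots$ supplied by (2), and set $W_k = \|C_k\| \cap X$, where $C_k$ is the component of $T - \{px, xt_1, \ldots, xt_k\}$ containing $x$ ($px$ denoting the parent edge of $x$ if it exists). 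I claim that $\{W_k : k \in \NN\}$ is a local base at $x$ in $X$.

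The core observation for this claim is the following: given any basic neighborhood $V = \|C\| \cap X$ of $x$ (with $C$ the component of $T - F$ containing $x$ for some finite $F$), edges of $F$ lying inside an \emph{irrelevant} subtree $\uc{t}$ with $\uc{t} \cap X = \emptyset$ contribute nothing to $V \cap X$ and may be ignored; meanwhile, $F$ being finite touches only finitely many of the relevant subtrees $\uc{t_n}$, either by cutting the edge $xt_n$ or by containing an edge inside $\uc{t_n}$. Letting $k^\ast$ be the largest such touched index, every $n > k^\ast$ has $\uc{t_n}$ fully intact in $C$, hence $\uc{t_n} \cap X \subseteq V$; combined with $x \in V$ this yields $W_{k^\ast} \subseteq V$.

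For the converse (1) $\Rightarrow$ (2), I argue contrapositively by a pigeonhole. Suppose some $x \in X \cap V(T)$ has uncountably many relevant children $t_\alpha$, and let $\{U_k : k \in \NN\}$ be any countable collection of neighborhoods of $x$ in $X$. Each $U_k$ contains a basic open $\|C_k\| \cap X$, where $C_k$ is the component of $T - F_k$ containing $x$ for some finite $F_k$. As above, each $F_k$ touches only finitely many of the $\uc{t_\alpha}$'s, so across all $k$ only countably many indices are touched in total; picking an untouched $\alpha_0$, the nonempty set $\uc{t_{\alpha_0}} \cap X$ lies entirely in every $U_k$. On the other hand, the neighborhood $V = \|C\| \cap X$, with $C$ the component of $T - \{xt_{\alpha_0}\}$ containing $x$, is disjoint from $\uc{t_{\alpha_0}} \cap X$. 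Hence no $U_k$ is contained in $V$, defeating the candidate family.

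The main obstacle is to trust that the naive family $\{W_k\}$, which severs only edges incident to $x$ itself, already suffices without recursing into the subtrees $\uc{t_n}$. A first instinct might be to iterate the construction inside each $\uc{t_n}$, but that would demand a countable branching hypothesis also at non-$X$ vertices, which (2) does not provide. The saving observation is that any finite $F$ can perturb only finitely many relevant subtrees, leaving the others fully intact in $V$; so a cofinal family needs only to specify how many of the first $k$ relevant children to sever.
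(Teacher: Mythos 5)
Your proof is correct and follows essentially the same route as the paper's: both directions use the same candidate neighbourhood bases (tails $\uc{x_n}$ along the ray for ends, and components obtained by severing the parent edge together with the first $k$ relevant child edges for vertices), and the same finiteness/pigeonhole argument that a finite or countable edge set can only ``touch'' finitely or countably many of the relevant subtrees. Your version of $(1)\Rightarrow(2)$ is phrased contrapositively and avoids invoking Hausdorffness, and it is slightly more careful than the paper in noting that $F_k$ may meet $\uc{t_\alpha}$ via an edge strictly inside that subtree rather than only via the edge $xt_\alpha$, but this is a refinement of the same idea rather than a different argument.
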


\begin{proof}
	For  $(1) \Rightarrow (2)$ consider some $x \in X \cap T$ with uncountably many children $t$ with $\uc{t} \cap X \neq \emptyset$,  and suppose for a contradiction that $X$ has a countable neighbourhood base $(U_n)_{n \in \NN}$ at $x$. By Hausdorffness, we have $\bigcap_{n \in \NN} U_n = \Set{x}$. However, for each $n$ there is a finite set of edges $F_n \subset E(T)$ such that the component $C_n$ of $T-F_n$ containing $x$ satisfies $C_n \subset U_n$. But then $F = \bigcup_{n \in \NN}F_n$ is countable, so some child $t$ of $x$ with $\uc{t} \cap X \neq \emptyset$ satisfies $xt \notin F$, giving $ \uc{t} \subset \bigcap_{n \in \NN} C_n \subset \bigcap_{n \in \NN} U_n$ a contradiction.
	
	Conversely, for $(2) \Rightarrow (1)$, let us fix an $x \in X$. If $x \in \Omega_E(T)$, then $x$ is represented by a unique rooted ray with edges $e_1,e_2,e_3,\ldots$. Then the regions $\uc{e_n}$ for $n \in \NN$ form a countable neighbourhood base for $x$ in $X$. And if $x \in V(T)$, then let $e_1,e_2,e_3,\ldots$ enumerate the countably edges at $x$ with $\uc{e_n} \cap X \neq \emptyset$, and let $e_0$ be the edge from $x$ to its parent (unless $x$ is the root of $X$). Write $C_n$ for the unique region of $T-\Set{e_0,e_1,\ldots,e_n}$ containing $x$; then from $(2)$ it readily follows that the $\|C_n\|$ form a countable neighbourhood base for $x$ in $X$.
		\end{proof}

\begin{proof}[Proof of Theorem~\ref{thm_met}]
It suffices to prove $(1) \Rightarrow (4)$. By the Representation Theorem~\ref{thm_rep} every edge-end space is homeomorphic to a subspace $X \subset \|T\|$ for some graph-theoretic tree $T$ such that $\Omega_E(T) \subseteq X$. 

Assuming that $X$ is first countable, we construct another tree $T'$ and show that $X$ is homeomorphic to $\Omega(T')$. By Lemma~\ref{lem_firstcountable} we know that for every $x \in X \cap V(T)$ we can enumerate its children $t'$ with $\uc{t'} \cap X \neq \emptyset$ as $t_1,t_2,t_3,\ldots$, a finite or infinite sequence. Then uncontract $x$ to a ray $R_x = s_1s_2s_3,\ldots$, connect $s_1$ to the lower neighbour of $x$, and make $t_n$ a child of $s_n$ for $n=1,2,3,\ldots$.  
	Call the resulting tree $T'$. Note that there is a natural embedding $h$ of $X$ into $\Omega(T')$: For $x \in X \cap T$ we let $h(x)$ be the end represented by the newly added ray $R_x$. And for $x \in X \cap \Omega(T)$, note that the edges of the rooted ray of $x$ in $T$ lie on a unique rooted ray in $T'$; let $h(x)$ be the corresponding end. Then it is readily seen that $h$ is a bijection between $X$ and $\Omega_E(T')$. We verify that $h$ is a homeomorphism. 
	
	To see that $h$ is continuous, suppose $h(x) = \omega$, and fix a basic open neighbourhood $\uc{t}$ of $\omega$ in $\Omega(T')$.  If $E(T)$ are cofinal in $\omega$ (i.e.\ if $x \in \Omega(T)$), then fix such an edge $e \in T[\uc{t}]$. It is easy to see that $h$ maps $\uc{e}_T$ into $\uc{t}_{T'}$. Otherwise, we may assume $t = s_n$ on $R_x$. Let $F$ consist of all edges $xt_i$ for $i \leq n$ together with the edge from $x$ to its unique predecessor in $T$. Let $C$ be the component of $T - F$ containing $x$. Then $h$ maps $C$ into $\uc{s_n}_{T'}$.
	To see that $h$ is open, consider an open set $U$ in $X$, and let $x \in U$. If $x$ is an end of $T$, then there is $e \in E(T)$ such that $x \in \uc{e}_T \cap X \subset U$, so $h(x) \in \uc{e}_{T'} \cap h(X) \subset f(U)$. If $x$ is a node of $T$, then there is a finite set of edges $F$ such that $x \in \|C\| \cap X \subset U$ for a component $C$ of $T-F$. Let $n \in \NN$ be larger than all indices of children of $x$ in $F$. Then  $h(x) \in  \uc{s_n} \cap h(X) \subset h(U) $.
\end{proof}

Finally, we mention that the graph $G_X$ constructed in the proof of Theorem~\ref{thm_rep} satisfies that $\Omega_E(G) = \Omega(G)$, giving an alternative route towards the result from {\cite{aurichi2024topological}} that the class of edge-end spaces is a subclass of the class of end spaces. In any case, this approach suggests the following open problem:

\begin{problem}
Characterize the graphs $G$ with $\Omega(G) = \Omega_E(G)$ (as topological spaces).
\end{problem}

\bibliographystyle{plain}
\bibliography{ref}
\end{document}